\newtheorem{theorem}{Theorem}[]
\newtheorem{lemma}{Lemma}[]
\theoremstyle{plain}
\newtheorem*{proposition}{Proposition}
\begin{document}

\author{J. Pascal Gollin and Karl Heuer}
\address{J. Pascal Gollin, Fachbereich Mathematik, Universit\"{a}t Hamburg, Bundesstra{\ss}e 55, 20146 Hamburg, Germany}
\address{Karl Heuer, Fachbereich Mathematik, Universit\"{a}t Hamburg, Bundesstra{\ss}e 55, 20146 Hamburg, Germany}

\title[]{Infinite end-devouring sets of rays with prescribed start vertices}

\begin{abstract}
We prove that every end of a graph contains either uncountably many disjoint rays or a set of disjoint rays that meet all rays of the end and start at any prescribed feasible set of start vertices.
This confirms a conjecture of Georgakopoulos.
\end{abstract}
\maketitle

\section{Introduction}

Looking for spanning structures in infinite graphs such as spanning trees or Hamilton cycles often involves difficulties that are not present when one considers finite graphs.
It turned out that the concept of \textit{ends} of an infinite graph is crucial for questions dealing with such structures. Especially for \textit{locally finite graphs}, i.e., graphs in which every vertex has finite degree, ends allow us to define these objects in a more general topological setting~\cite{diestel_arx}.

Nevertheless, the definition of an end of a graph is purely combinatorial:
We call one-way infinite paths \textit{rays} and the vertex of degree~$1$ in them the \textit{start vertex} of the ray.
For any graph $G$ we call two rays \textit{equivalent} in $G$ if they cannot be separated by finitely many vertices.
It is easy to check that this defines an equivalence relation on the set of all rays in the graph $G$.
The equivalence classes of this relation are called the \textit{ends} of $G$ and a ray contained in an end $\omega$ of $G$ is referred to as an $\omega$-\textit{ray}.

When we focus on the structure of ends of an infinite graph $G$, we observe that \textit{normal spanning trees} of $G$, i.e., rooted spanning trees of $G$ such that the endvertices of every edge of $G$ are comparable in the induced tree-order, have a powerful property:
For any normal spanning tree $T$ of $G$ and every end $\omega$ of $G$ there is a unique $\omega$-ray in $T$ which starts at the root of $T$ and has the property that it meets every $\omega$-ray of $G$, see~\cite[Sect.~8.2]{diestel_buch}.
For any graph $G$, we say that an $\omega$-ray with this property \textit{devours} the end $\omega$ of $G$.
Similarly, we say that a set of $\omega$-rays \textit{devours} $\omega$ if every $\omega$-ray in $G$ meets at least one ray out of the set.

End-devouring sets of rays are helpful for the construction of spanning structures such as infinite Hamilton cycles, as done by Georgakopoulos~\cite{agelos-HC}.
He proved the following proposition about the existence of finite sets of rays devouring any \textit{countable end}, i.e., an end which does not contain uncountably many disjoint rays.
Note that the property of an end being countable is equivalent to the existence of a finite or countably infinite set of rays devouring the end.

\begin{proposition}\label{k_dev_rays}\cite{agelos-HC}
Let $G$ be a graph and $\omega$ be a countable end of $G$.
If $G$ has a set $\mathcal{R}$ of $k \in \mathbb{N}$ disjoint $\omega$-rays, then it also has a set $\mathcal{R}'$ of $k$ disjoint $\omega$-rays that devours~$\omega$.
Moreover, $\mathcal{R}'$ can be chosen so that its rays have the same start vertices as the rays in $\mathcal{R}$.
\end{proposition}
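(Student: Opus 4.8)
\medskip
\noindent The plan is to obtain $\mathcal R'$ as a limit of successive reroutings of $\mathcal R$, treating one ray of a fixed countable devouring family at a time. Using that $\omega$ is countable I first fix a countable family $\mathcal S=\{S_n:n\in\mathbb N\}$ of $\omega$-rays devouring $\omega$ (repeating entries if $\mathcal S$ happens to be finite), and I record the simple but crucial point that, since a tail of an $\omega$-ray is again an $\omega$-ray, every $\omega$-ray meets $H:=\bigcup_{n}S_n$ in \emph{infinitely many} vertices. Starting from $\mathcal R^{0}:=\mathcal R$, I would build recursively $k$-tuples $\mathcal R^{n}=(R_1^{n},\dots,R_k^{n})$ of pairwise disjoint $\omega$-rays with the prescribed start vertices, together with finite initial segments $Q_j^{n}\subseteq R_j^{n}$ (the \emph{committed} parts) satisfying $Q_j^{n}\subseteq Q_j^{n+1}$ and $\lvert Q_j^{n}\rvert\to\infty$, and such that once a part is committed it is never changed again. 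Then $R_j^{\infty}:=\bigcup_{n}Q_j^{n}$ is a ray starting at the prescribed vertex, and $R_1^{\infty},\dots,R_k^{\infty}$ are pairwise disjoint; these will be the rays of $\mathcal R'$.

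The engine of the recursion is a rerouting step. Given $\mathcal R^{n}$ and the $Q_j^{n}$, and an index $m$ to be handled at this stage (chosen by a bookkeeping enumeration in which every index recurs cofinally), put $F:=\bigcup_j Q_j^{n}$, let $v_j$ be the last vertex of $Q_j^{n}$, and let $T_j:=v_jR_j^{n}$ be the uncommitted tails. After deleting the finite set $F\setminus\{v_1,\dots,v_k\}$ one checks that $T_1,\dots,T_k$ and $S_m$ all lie in one component $C$ and belong to a common end $\omega'$ of $C$ whose rays are $\omega$-rays of $G$. Within $C$ I would apply a \emph{rerouting lemma}: given pairwise disjoint $\omega'$-rays $T_1,\dots,T_k$ from $v_1,\dots,v_k$ and an $\omega'$-ray $S$, there are pairwise disjoint $\omega'$-rays $T_1',\dots,T_k'$ from the same vertices, meeting $F$ only in the $v_j$, one of which eventually coincides with a tail of $S$. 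This is easy: if $S$ already meets some $T_{j_0}$, keep $T_i':=T_i$; otherwise pick a shortest $S$--$\bigcup_j T_j$ path $P$ (hence internally disjoint from $S\cup\bigcup_j T_j$), say from $s\in S$ to $t\in T_1$, let $T_1'$ follow $T_1$ to $t$, cross $P$ to $s$, and then follow $S$, and keep $T_i':=T_i$ for $i\ge2$; a routine check gives pairwise disjoint $\omega'$-rays with the correct start vertices and with $T_1'$ ending in a tail of $S$. One then sets $R_j^{n+1}:=Q_j^{n}T_j'$ and extends the committed parts past the newly incorporated portion of $S_m$.

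The main difficulty, and the point where the hypothesis that $\omega$ is \emph{countable} must be used in an essential way, is to refine this scheme so that $\mathcal R'$ actually works. Pairwise disjointness of the $R_j^{\infty}$ and their start vertices are immediate from the construction. Moreover, \emph{if} one arranges the reroutings and the bookkeeping so that $\bigcup_j R_j^{\infty}$ contains, for every $n$, all but finitely many vertices of $S_n$, then $\mathcal R'$ devours $\omega$ --- every $\omega$-ray meets $H$ in infinitely many vertices, hence meets this set --- and each $R_j^{\infty}$ is an $\omega$-ray, as it then has a tail inside some $S_n$. So the crux is to strengthen the rerouting lemma (for instance so that $T_{j_0}'$ traverses a prescribed long initial segment of $S$) and to organize the bookkeeping so that this exhaustion is achieved, i.e. so that one can always make nontrivial progress on the current $S_m$ without ever getting stuck. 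I expect essentially all of the real work to lie in this step; the rest is routine.
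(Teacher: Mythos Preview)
Your proposal has a real gap at its logical core. The implication you invoke --- that if $\bigcup_j R_j^{\infty}$ contains all but finitely many vertices of each $S_n$ then $\mathcal R'$ devours $\omega$ --- is false in general: the exceptional set $H\setminus\bigcup_j R_j^{\infty}$ is a countable union of finite sets (one per $n$) and may well be infinite, so an $\omega$-ray can meet $H$ infinitely often while living entirely in these exceptions. Concretely, in the half-grid $G=\mathbb N\times\mathbb N$ with $S_n$ the $n$-th column and $k=1$, there is a ray from $(0,0)$ covering every vertex except $\{(n,0):n\ge 1\}$ (go to $(0,1)$ and then traverse $\mathbb N\times\mathbb N_{\ge 1}$ Hamiltonially); it misses only one vertex of each $S_n$, yet the bottom-row ray $(1,0)(2,0)(3,0)\cdots$ is an $\omega$-ray disjoint from it. Beyond this, you explicitly leave the ``crux'' --- arranging the bookkeeping so that the exhaustion is actually achieved --- as an expectation rather than an argument, and even the rerouting lemma has a slip (when $S$ already meets some $T_{j_0}$, keeping all $T_i$ unchanged does not yield a ray ending in a tail of $S$, and rerouting $T_{j_0}$ along $S$ may collide with the other $T_i$).

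The paper's proof is organised around a different invariant. It fixes an inclusion-maximal family $\mathcal R_{\max}$ of disjoint $\omega$-rays and enumerates its \emph{vertices} $v_0,v_1,\dots$; at step $i$ it tries to make the committed initial segments either \emph{contain} $v_{i-1}$ or \emph{separate} it from $\omega$. This ``contain or separate'' condition is exactly what devouring demands vertex by vertex, whereas ``absorb a cofinite part of each $S_n$'' is simultaneously harder to secure and, as the example shows, still insufficient to conclude from. The substantive work in the paper is an a posteriori argument: assuming some $\omega$-ray avoids the limit family $\mathcal R'$, one uses the limit rays themselves to exhibit a witness that the contain-or-separate property was achievable at the relevant step, contradicting the construction rule. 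Your framework supplies no analogue of this mechanism.
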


For the proof of this proposition Georgakopoulos recursively applies a construction similar to the one yielding normal spanning trees to find rays for the end-devouring set.
However, this proof strategy does not suffice to give a version of this proposition for infinitely many rays.
He conjectured that such a version remains true~\cite[Problem~1]{agelos-HC}.
We confirm this conjecture with the following theorem, which also covers the proposition above.

\begin{theorem}\label{main-Thm}
Let $G$ be a graph, $\omega$ a countable end of $G$ and $\mathcal{R}$ any set of disjoint $\omega$-rays.
Then there exists a set $\mathcal{R}'$ of disjoint $\omega$-rays that devours~$\omega$ and the start vertices of the rays in $\mathcal{R}$ and $\mathcal{R}'$ are the same.
\end{theorem}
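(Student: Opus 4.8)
The plan is to build $\mathcal{R}'$ by a recursion that constructs all of its rays in parallel while simultaneously recording a sequence of finite separators that certifies the end-devouring property.

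First I would clear away the easy cases. Passing to the component of $G$ containing $\omega$, we may assume $G$ is connected; and if $\mathcal{R}$ is finite, the statement is exactly Proposition~\ref{k_dev_rays}, so we may take $\mathcal{R}=\{R_n:n\in\mathbb{N}\}$ with $s_n$ the start vertex of $R_n$. Since $\omega$ is countable, by the equivalence noted in the introduction there is a countable set of $\omega$-rays devouring $\omega$; taking the union of their vertex sets gives a \emph{countable} set $D\subseteq V(G)$ meeting every $\omega$-ray, equivalently such that $G-D$ contains no $\omega$-ray. The point of $D$ is that it turns the requirement ``$\bigcup\mathcal{R}'$ meets every $\omega$-ray'', which ranges over a possibly uncountable family, into a countable list of demands, and it will supply the levels of the separator sequence.

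The construction runs through $\omega$ stages. At stage $t$ it produces finitely many pairwise disjoint finite paths $P^t_0,\dots,P^t_{k_t}$, with $P^t_n$ starting at $s_n$ and $P^t_n\subseteq P^{t+1}_n$, together with an increasing chain of finite sets $W_0\subseteq W_1\subseteq\cdots$ whose union contains $D$; write $C_m:=C(W_m,\omega)$, a decreasing chain of $\omega$-components, and put $R'_n:=\bigcup_tP^t_n$. The invariants I would carry are: the $P^t_n$ stay pairwise disjoint; once a path has crossed $W_m$ it never re-enters $W_m$, and the depth of its last vertex tends to infinity with $t$; and, crucially, the union of all paths meets each already-built level $C_m$ in a set that, inside $G[C_m]$, separates the inner boundary (the neighbourhood of $W_m$ in $C_m$) from $C_{m+1}$ — a ``curtain'' at level $m$, possibly infinite, accumulated one strand at a time across infinitely many stages. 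Each stage performs a single dovetailed task: extend one path one step toward $\omega$; activate the next start vertex $s_{k_t+1}$; enlarge the current top level $W_m$ to a finite $W_{m+1}$ absorbing the next vertices of $D$ and of the unhandled $R_n$'s and then reroute/extend the active paths by one further strand of the curtain at that level; or push the active paths past a prescribed finite vertex set to force, in the limit, each $R'_n$ into $\omega$. In the limit, disjointness is preserved, every $s_n$ is used, each $R'_n$ is an $\omega$-ray, and the curtain invariant holds at every level. Finally, suppose some $\omega$-ray $R$ avoided $\bigcup\mathcal{R}'$. Since $\bigcap_mC_m\subseteq V(G)\setminus D$ contains no $\omega$-ray, no tail of $R$ lies in every $C_m$; hence for some (indeed every large) $m$ the ray $R$ leaves $C_m$ for the last time at a vertex of $W_m$ and then runs inside $C_m$ from the inner boundary of $C_m$ into $C_{m+1}$, all while missing $\bigcup\mathcal{R}'$ — contradicting the curtain at level $m$. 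So $\mathcal{R}'$ devours $\omega$.

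I expect the main obstacle to be the rerouting task. Given only the finitely many already-committed, start-anchored paths sitting at level $m$, one must be able to prolong them so that their union alone picks up one more strand of a full separator of the inner boundary of $C_m$ from $C_{m+1}$ inside $C_m$, \emph{without} touching the committed initial segments, \emph{without} spoiling disjointness, and while leaving each path free to continue on toward $\omega$ — with a single snaking path possibly forced to play the role of many parallel ones, as a space-filling ray does in a grid. This is precisely where the finite argument behind Proposition~\ref{k_dev_rays} has to be upgraded into a rerouting lemma internal to the $\omega$-component, and the delicate part is to arrange the bookkeeping so that the separators $W_m$ can always be enlarged compatibly with every rerouting while still driving every constructed ray into $\omega$. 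Everything else is standard limiting-process hygiene.
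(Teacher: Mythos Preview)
Your framework is reasonable, but the proposal defers precisely the hard part to an unproved ``rerouting lemma'' and gives no indication of how to prove it. You ask that, with only finitely many start-anchored initial segments committed, one can always extend them so that level by level their union accumulates a full separator between the inner boundary of $C_m$ and $C_{m+1}$ inside $G[C_m]$, while the segments stay pairwise disjoint and each remains extendable to an $\omega$-ray. This curtain invariant is strong: the inner boundary may be infinite, the separator you need may be infinite, and you must weave it out of countably many growing paths anchored at prescribed vertices, while simultaneously serving infinitely many other levels and activating ever more start vertices. You yourself flag this as ``the main obstacle'' and as ``precisely where the finite argument behind Proposition~\ref{k_dev_rays} has to be upgraded'' --- but no upgrade is offered, and it is not at all clear that such a rerouting is always possible. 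As written, the proposal reduces the theorem to a lemma that is essentially as hard as the theorem.

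The paper avoids this entirely by a different, indirect device. It fixes an inclusion-maximal family $\mathcal{R}_{\max}$ of disjoint $\omega$-rays, enumerates its vertices as $(v_i)$, and at step $i$ merely \emph{attempts} to extend the current initial segments so that their union contains $v_{i-1}$ or separates $v_{i-1}$ from $\omega$, subject to the standing constraints (disjointness, extendability to disjoint $\omega$-rays, etc.); if this is impossible it simply extends the segments trivially along the current witnessing rays. The devouring property is then proved retrospectively by contradiction: if some $\omega$-ray $R$ avoided all the limit rays $R'_s$, it would contain some $v_j$, and one exhibits --- using the already-constructed limit rays $R'_s$ themselves as the new witnesses --- an explicit way in which the separation of $v_j$ \emph{was} achievable at step $j{+}1$, contradicting the ``do it if possible'' clause. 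Thus the paper never has to build curtains or separators directly; the key idea is this attempt-or-defer construction coupled with a posteriori verification. Your plan has no analogue of this trick, and without it the rerouting step is the whole difficulty.
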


Note that, in contrast to the proposition, the difficulty of Theorem~\ref{main-Thm} for an infinite set $\mathcal{R}$ comes from fixing the set of start vertices, since any inclusion-maximal set of disjoint $\omega$-rays devours $\omega$.

Moreover, we shall sketch at the beginning of Section~3 an example of why it is necessary to require the end to be countable.

\section{Preliminaries}

All graphs in this paper are simple and undirected.
For basic facts about finite and infinite graphs we refer the reader to \cite{diestel_buch}.
If not stated differently, we also use the notation of \cite{diestel_buch}.

For two graphs $G$ and $H$ we define their Cartesian product $G \times H$ as the graph on the vertex set $V(G) \times V(H)$ such that two vertices ${(g_1, h_1), (g_2, h_2) \in V(G\times H)}$ are adjacent if and only if either $h_1 = h_2$ and $g_1g_2 \in E(G)$ or $g_1 = g_2$ and $h_1h_2 \in E(H)$ holds.
If $H$ consists just of a single vertex $h$, we may write $G \times h$ instead of $G \times H$.
We define the union $G \cup H$ of $G$ and $H$ as the graph $(V(G) \cup V(H), E(G) \cup E(H))$.

Any ray $T$ that is a subgraph of a ray $R$ is called a \textit{tail} of $R$.
For a vertex $v$ and an end $\omega$ of a graph $G$ we say that a vertex set $X \subseteq V(G)$ \textit{separates} $v$ from $\omega$ if there does not exist any $\omega$-ray that is disjoint from $X$ and contains $v$.

For a finite set $M$ of vertices of a graph $G$ and an end $\omega$ of $G$, let $C(M, \omega)$ denote the unique component of $G - M$ that contains a tail of every $\omega$-ray.

Given a path or ray $Q$ containing two vertices $v$ and $w$ we denote the unique $v \,$--$\, w$ path in $Q$ by $vQw$.
Furthermore, for $Q$ being a $v \,$--$\, w$ path we write $v \bar{Q}$ for the path that is obtained from $Q$ by deleting~$w$.

For a ray $R$ that contains a vertex $v$ we write $vR$ for the tail of $R$ with start vertex $v$.

We use the following notion to abbreviate concatenations of paths and rays.
Let~$P$ be a $v \,$--$\, w$ path for two vertices $v$ and $w$, and let $Q$ be either a ray or another path such that $V(P) \cap V(Q) = \lbrace w \rbrace$.
Then we write $PQ$ for the path or ray $P \cup Q$, respectively.
We omit writing brackets when stating concatenations of more than two paths or rays.

\section{Theorem}
Before we prove Theorem~\ref{main-Thm}, we show the necessity of assuming the considered end to be countable.
Let $R$ denote a ray and $K_{1, \aleph_1}$ the star with center $c$ and $\aleph_1$ many leaves~$\ell_\alpha$ for all ordinals $\alpha < \aleph_1$.
Now we define $G = R \times K_{1, \aleph_1}$.
By definition,~$G$ contains uncountably many disjoint rays. Furthermore, all rays in~$G$ belong to a unique end $\omega$.
For reasons of cardinality, a countable set of rays devouring $\omega$ cannot exist in~$G$.
But not even for all uncountable sets $\mathcal{R}$ of disjoint $\omega$-rays we can find a set $\mathcal{R}'$ of $\omega$-rays that devours $\omega$  such that the start vertices of the rays of~$\mathcal{R}$ and~$\mathcal{R}'$ coincide.
Take any partition $X \cup Y = \aleph_1$ into uncountable sets $X$ and $Y$.
Let $\mathcal{R} := \{ R \times \ell_\beta \; ; \; \beta \in X\}$, which then prescribes the set of start vertices.
Suppose there is a set $\mathcal{R}'$ of disjoint rays with the same start vertices devouring $\omega$.
For every $\gamma \in Y$ there needs to be a ray in $\mathcal{R}'$ containing a vertex of $R \times \ell_\gamma$.
By definition of~$G$ this ray has to contain a vertex of~$R \times c$.
Hence in order to hit every ray $R \times \ell_\gamma$ for $\gamma \in Y$ we need uncountably many rays of $\mathcal{R}'$ each hitting the countable vertex set of $R \times c$.
This is a contradiction.

\pagebreak
For the proof of Theorem~\ref{main-Thm} we shall use the following characterisation of $\omega$-rays.

\setcounter{lemma}{1}
\begin{lemma}\label{lemma:omega-rays}
    Let $G$ be a graph, $\omega$ an end of $G$ and $\mathcal{R}_{max}$ an inclusion-maximal set of pairwise disjoint $\omega$-rays.
    A ray $R \subseteq G$ is an $\omega$-ray, if and only if it meets rays of~$\mathcal{R}_{max}$ infinitely often.
\end{lemma}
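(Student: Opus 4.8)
The plan is to establish the two implications separately, in both cases relying only on the inclusion-maximality of $\mathcal{R}_{max}$ and on two standard facts about ends: every tail of an $\omega$-ray is again an $\omega$-ray, and for every finite $M \subseteq V(G)$ every $\omega$-ray has a tail contained in $C(M,\omega)$ (which is essentially the definition of $C(M,\omega)$). Here I read ``$R$ meets rays of $\mathcal{R}_{max}$ infinitely often'' as ``$R$ has infinitely many vertices on $\bigcup\mathcal{R}_{max}$''.

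For the forward implication I would argue by contradiction. Suppose $R$ is an $\omega$-ray with only finitely many vertices on $\bigcup\mathcal{R}_{max}$. Then $R$ has a tail $R'$ avoiding all of these vertices, so $R'$ is an $\omega$-ray disjoint from every ray of $\mathcal{R}_{max}$; moreover $R'$ is none of those rays, since no ray is disjoint from itself. Hence $\mathcal{R}_{max} \cup \{ R' \}$ is a set of pairwise disjoint $\omega$-rays properly extending $\mathcal{R}_{max}$, contradicting inclusion-maximality.

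For the converse I would prove the contrapositive. If $R$ is not an $\omega$-ray, then $R$ belongs to some end $\omega' \neq \omega$ and hence is not equivalent to any ray of $\mathcal{R}_{max}$; picking one such ray and a finite vertex set separating the two, we obtain a finite set $M \subseteq V(G)$ and a component $D \neq C(M,\omega)$ of $G - M$ containing a tail of $R$. It then suffices to show that $\bigcup\mathcal{R}_{max}$ has only finitely many vertices in $D$: each ray of $\mathcal{R}_{max}$, being an $\omega$-ray, has a tail in $C(M,\omega)$ and hence only finitely many vertices outside $C(M,\omega)$, in particular only finitely many in $D$; and any ray of $\mathcal{R}_{max}$ meeting $D$ must, on its way to its tail in $C(M,\omega)$, pass through $M$, so by pairwise disjointness at most $|M|$ of the rays meet $D$ at all. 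Thus $V(D) \cap V\!\left(\bigcup\mathcal{R}_{max}\right)$ is a finite union of finite sets, and since $R$ has all but finitely many of its vertices in $D$, it meets $\bigcup\mathcal{R}_{max}$ only finitely often.

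The whole argument is short, and the only place that needs a little care is this last counting step: one has to notice that a ray which visits both $D$ and $C(M,\omega)$ is forced through the separator $M$, so that the pairwise disjointness of $\mathcal{R}_{max}$ caps the number of rays reaching $D$. Everything else follows immediately from the definitions.
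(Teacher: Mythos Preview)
Your proof is correct and follows essentially the same approach as the paper's: the forward direction uses inclusion-maximality on a tail of $R$, and the backward direction separates $\omega$ from $\omega'$ by a finite set $M$ and argues that only finitely many vertices of $\bigcup\mathcal{R}_{max}$ can lie in the component containing a tail of $R$. Your counting argument (at most $|M|$ rays reach $D$, each with finitely many vertices there) is just an explicit unpacking of the paper's pigeonhole step.
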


\begin{proof}
    Let $W$ denote the set $\bigcup \{ V(R)\, ;\, R \in \mathcal{R}_{max}\}$.
    
    If $R$ is an $\omega$-ray, then each tail of $R$ meets a ray of $\mathcal{R}_{max}$ since $\mathcal{R}_{max}$ is inclusion-maximal.
    Hence $R$ meets $W$ infinitely often.
    
    Suppose for a contradiction that $R$ is an $\omega'$-ray for an end $\omega' \neq \omega$ of~$G$ that contains infinitely many vertices of $W$.
    Let $M$ be a finite set of vertices such that the two components $C := C(M, \omega)$ and $C' := C(M, \omega')$ of $G-M$ are different.
    By the pigeonhole principle there is either one $\omega$-ray of~$\mathcal{R}_{max}$ containing infinitely many vertices of~${V(C') \cap V(R) \cap W}$, 
    or infinitely many disjoint rays of~$\mathcal{R}_{max}$ containing those vertices.
    In both cases we get an $\omega$-ray with a tail in $C'$, since we cannot leave $C'$ infinitely often through the finite set $M$.
    But this contradicts the definition of~$C$.
\end{proof}

The idea of the proof of Theorem~\ref{main-Thm} is to build the end-devouring set of rays in countably many steps while fixing in each step only finitely many finite paths as initial segments instead of whole rays, but ensuring that they can be extended to rays.
In order to guarantee that the set of rays turns out to devour the end, we also fix an inclusion maximal set of vertex disjoint rays of our specific end, so a countable set, and an enumeration of the vertices on these rays.
Then we try in each step to either contain or separate the least vertex with respect to the enumeration that is not already dealt with from the end with appropriately chosen initial segments if possible.
Otherwise, we extend a finite number of initial segments while still ensuring that all initial segments can be extended to rays.
Although it is impossible to always contain or separate the considered vertex with our initial segments while being able to continue with the construction, it will turn out that the rays we obtain as the union of all initial segments actually do this.

\begin{proof}[Proof of Theorem~\ref{main-Thm}]
Let us fix a finite or infinite enumeration $\{ R_j \; ; \; j < \lvert \mathcal{R} \rvert \}$ of the rays in $\mathcal{R}$.
Furthermore, let $s_j$ denote the start vertex of~$R_j$ for every $j < |\mathcal{R}|$ and define $S := \{ s_j \; ; \; j < \lvert \mathcal{R} \rvert \}$.

Next we fix an inclusion-maximal set $\mathcal{R}_{max}$ of pairwise disjoint $\omega$-rays and an enumeration $\{ v_i  \; ; \; i \in \mathbb{N} \}$ of the vertices in $W:= \bigcup \lbrace V(R) \; ; \; R \in \mathcal{R}_{max} \rbrace$.
This is possible since $\omega$ is countable by assumption.

We do an inductive construction such that the following holds for every~${i \in \mathbb{N}}$:
\begin{enumerate}
\item $P^i_{s}$ is a path with start vertex $s$ for every $s \in S$.
\item $P_s^{i} = s$ for all but finitely many $s \in S$.
\item $P^{i-1}_{s} \subseteq P^{i}_{s}$ for every $s \in S$.
\item For every ${s = s_j \in S}$ with ${j < \min\{i, |\mathcal{R}|\}}$ there is a~$w_{s}^{i} \in W \cap (P_{s}^i \setminus P_{s}^{i-1})$.
\item $P^i_{s}$ and $P^i_{s'}$ are disjoint for all $s, s' \in S$ with $s \neq s'$.
\item For every $s \in S$ there exists an $\omega$-ray $R^i_s$ with $P^i_{s}$ as initial segment and $s$ as start vertex such that all rays $R^i_s$ are pairwise disjoint.
\end{enumerate}
If possible and not spoiling any of the properties (1) to (6), we incorporate the following property:
\begin{enumerate}
\item[$(\ast)$] $\bigcup\limits_{s\in S}P^{i}_s$ either contains $v_{i-1}$ or separates $v_{i-1}$ from $\omega$ if $i > 0$.
\end{enumerate}

We begin the construction for $i=0$ by defining $P^0_{s} := s =: P^{-1}_{s}$ for every~$s \in S$.
All conditions are fulfilled as witnessed by $R^0_{s_j} := R_{j}$ for every $j < |\mathcal{R}|$.

Now suppose we have done the construction up to some number $i \in \mathbb{N}$.
If we can continue with the construction in step $i+1$ such that properties (1) to (6) together with $(\ast)$ hold, we do so and define all initial segments $P^{i+1}_s$ and rays $R^{i+1}_s$ accordingly. 
Otherwise, we set for all $s \in S$
\begin{align*}
P^{i+1}_{s} &:= s R^i_s w^i_s &&\textnormal{ if } s = s_j \textnormal{ for } j < \min \{ i+1, |\mathcal R| \rbrace \textnormal{ and}\\
P^{i+1}_{s} &:= P^{i}_s &&\textnormal{ otherwise},
\end{align*}
where $w_{s}^{i}$ denotes the first vertex of $W$ on $R^i_s \setminus P^{i}_s$ which exist by Lemma~\ref{lemma:omega-rays}.
With these definitions properties (1) up to (5) hold for $i+1$.
Witnessed by $R^{i+1}_s := R^{i}_s$ for every $s \in S$ we immediately satisfy (6) too.
This completes the inductive part of the construction.

Using the paths $P^i_s$ we now define the desired $\omega$-rays of $\mathcal{R}'$.
We set ${R'_s := \bigcup_{i \in \mathbb{N}}P^i_s}$ for every $s \in S$ and $\mathcal{R}' := \lbrace R'_s \; ; \; s \in S \rbrace$.
Properties~(1), (3) and (4) ensure that~$R'_s$ is a ray with start vertex $s$ for every $s \in S$, while we obtain due to property~(5) that all rays $R'_s$ are pairwise disjoint.
Property~(4) also ensures that all rays in~$\mathcal{R}'$ are~$\omega$-rays by Lemma~\ref{lemma:omega-rays}.

It remains to prove that the set $\mathcal{R}'$ devours the end $\omega$.
Suppose for a contradiction that there exists an $\omega$-ray $R$ disjoint from $\bigcup \mathcal{R}'$.
By the maximality of our chosen set of $\omega$-rays $\mathcal{R}_{max}$, we know that $R$ contains a vertex $v_{j}$ for some $j \in \mathbb{N}$.
Without loss of generality, let $v_{j}$ be the start vertex of $R$.
Let~$P$ be an $R \,$--$\, \bigcup \mathcal{R}'$ path 
among those ones that are disjoint from $\bigcup_{s \in S} s \bar{P}^{j+1}_s$ 
for which~$v_{j}Rp$ is as short as possible where~$p$ denotes the common vertex of~$P$ and~$R$.
Such a path exists, because all rays in~$\mathcal{R}' \cup \lbrace R \rbrace$ are equivalent and $\bigcup_{s \in S} s \bar{P}^{j+1}_s$ is finite by property~(2).
Let~${t \in S}$ and $q \in V(G)$ be such that $V(P) \cap V(R'_t) = \lbrace q \rbrace$.
Furthermore, let~$R^*$ be an $\omega$-ray with start vertex~$r^* \in R$ such that $R^*$ is disjoint from $\bigcup_{s \in S} R'_s$ and ${P(pRr^*) \cap R^* = \{r^*\}}$
for which~$v_{j}Rr^*$ is as short as possible.
Since~$p$ and~$pR$ are candidates for $r^*$ and~$R^*$, respectively, such a choice is possible.
Now we are able to point out a contradiction because we could have incorporated property~$(\ast)$ in the construction without violating any of the other properties in step $j+1$.
For this we define
\[
    \hat{P}^{j+1}_t := (tR'_tq) P (pRr^*) \quad \textnormal{ and } \quad 
    \hat{R}^{j+1}_t := \hat{P}^{j+1}_t  R^*\ ;
\]
and replace in step $j+1$ the ray $R^{j+1}_t$ by $\hat{R}^{j+1}_t$, the path $P^{j+1}_t$ by $\hat{P}^{j+1}_t$ and for all $s \in S \setminus \lbrace t \rbrace$ the ray $R^{j+1}_s$ by $R'_s$ while keeping $P^{j+1}_s$ as it was defined (cf.~Figure~\ref{image}).
By this construction properties~(1) to~(6) are satisfied.

\begin{figure}[htbp]
\centering
\includegraphics[width=9cm]{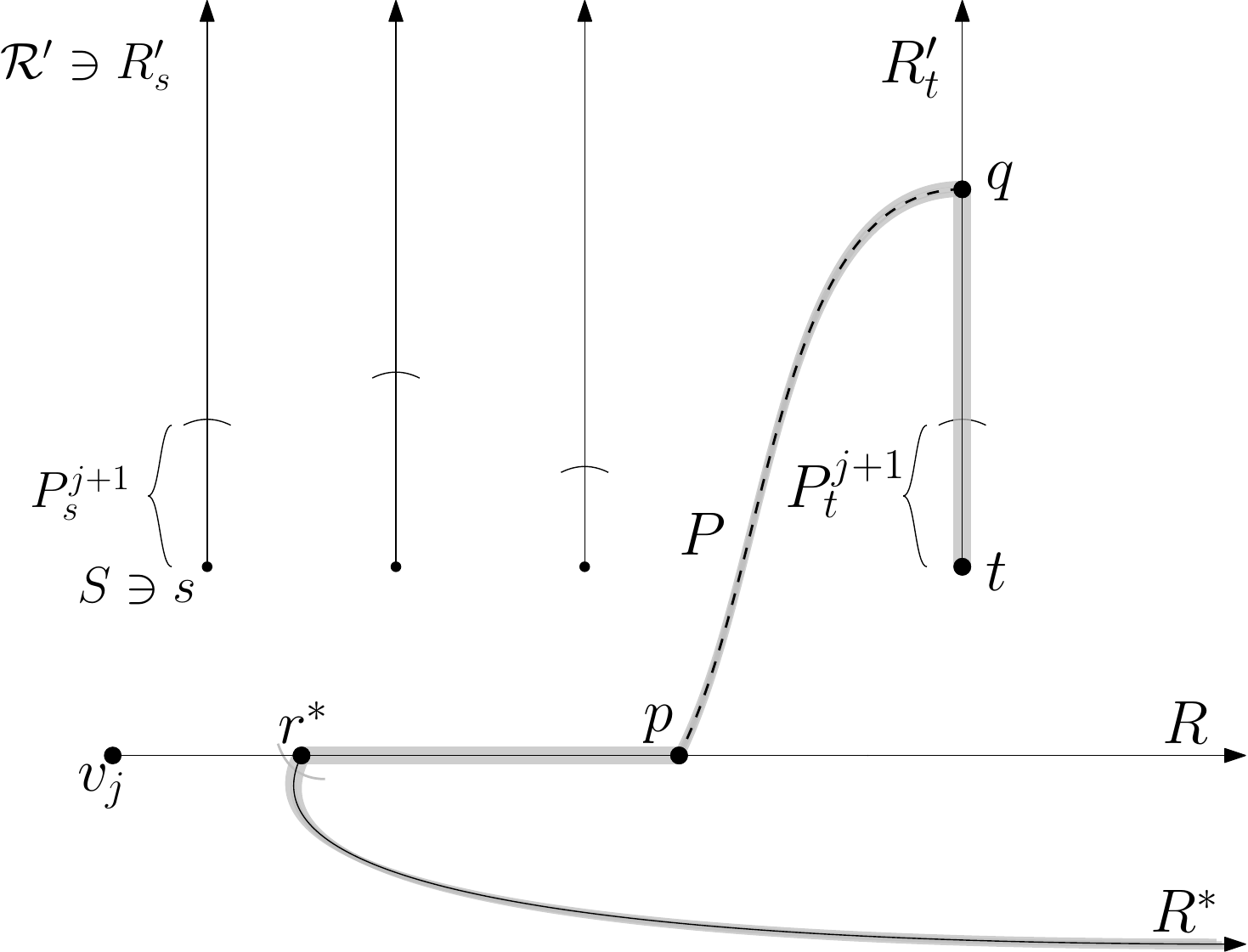}
\caption{Sketch of the situation above with $\hat{R}^{j+1}_t$ highlighted in grey.}
\label{image}
\end{figure}

Now we show that~$(\ast)$ holds as well.
Suppose for a contradiction that there exists an $\omega$-ray $Z$ disjoint from $\bigcup_{s\in S}P^{j+1}_s$ with start vertex $v_j$.
First note that $Z$ is disjoint from $r^*Rp \subseteq {P}^{j+1}_t$.
Let us now show that~$Z$ is also disjoint from ${pR \cup \bigcup_{s\in S}R'_s}$.
Otherwise, let~$z$ denote the first vertex along $Z$ that lies in ${pR \cup \bigcup_{s\in S}R'_s}$.
However, $z$ cannot be contained in $pR$, as this would contradict the choice of $r^*$, and it cannot be an element of $\bigcup_{s\in S}R'_s$ since this would contradict the choice of $p$.
But now with $Z$ being not only disjoint from $pR \cup \bigcup_{s\in S}R'_s$ but also from $r^*Rp$, we get again a contradiction to the choice of~$r^*$.
Hence, we would have been able to incorporate property $(\ast)$ without violating any of the properties (1) to (6) in step $j+1$ of our construction.
This, however, is a contradiction since we always incorporated property $(\ast)$ under the condition of maintaining properties (1) to (6).
So we arrived at a contradiction to the existence of the ray $R$ since by~$(\ast)$ every ray containing~$v_j$ meets the initial segments of rays fixed in our construction at step~$j+1$.
Therefore, the set $\mathcal{R}'$ devours the end $\omega$.
\end{proof}

\end{document}